\journal{arXiv}
\definecolor{webgreen}{rgb}{0,.5,0}
\definecolor{webbrown}{rgb}{.6,0,0}
\DeclareMathOperator{\perm}{perm}
\theoremstyle{plain}
\newtheorem{theorem}{Theorem}
\newtheorem{corollary}[theorem]{Corollary}
\newtheorem{lemma}[theorem]{Lemma}
\theoremstyle{definition}
\newtheorem{definition}[theorem]{Definition}
\theoremstyle{remark}
\newtheorem{remark}[theorem]{Remark}
\begin{document}

\begin{frontmatter}

\title{Rational Convolution Roots of Isobaric Polynomials}

\author[Conciaddress]{Aura Conci}
\address[Conciaddress]{Department of Computer Science, Universidade Federal Fluminense, 156, Sao Domingos
Niteroi - Rio de Janeiro, Brasil}
\ead[url]{http://www.ic.uff.br/$\sim$aconci/}
\ead{aconci@ic.uff.br}

\author[Liaddress]{Huilan Li\corref{mycorrespondingauthor}}
\address[Liaddress]{Department of Mathematics, Drexel University, 3141 Chestnut Street, Philadelphia PA, 19104 USA}
\cortext[mycorrespondingauthor]{Corresponding author}
\ead{huilan.li@gmail.com}


\ead[url]{http://www.math.drexel.edu/$\sim$huilan}

\author[MacHenryaddress]{Trueman MacHenry}
\address[MacHenryaddress]{Department of Mathematics and Statistics, York University, 4700 Keele Street, Toronto ON, M3J 1P3 Canada}

\ead{machenry@mathstat.yorku.ca}
\ead[url]{http://www.math.yorku.ca/$\sim$machenry/‎}




\begin{abstract}

In this  paper, we exhibit two matrix representations of the rational roots of generalized Fibonacci polynomials (GFPs) under convolution product,  in terms of determinants and  permanents, respectively.  The underlying root formulas for GFPs and for weighted isobaric polynomials (WIPs), which  appeared in an earlier paper by  MacHenry and Tudose, make use of two types of operators. These operators are derived from the generating functions for Stirling numbers of the first kind and second kind.  Hence we call them Stirling operators. To construct matrix representations of the roots of GFPs, we use the Stirling operators of the first kind.  We give explicit examples to show how the Stirling operators of the second kind appear in the low degree cases for the WIP-roots.
  As a consequence of the matrix construction, we have matrix representations of multiplicative arithmetic functions under the Dirichlet product  into its divisible closure.  
\end{abstract}

\begin{keyword}
 weighted isobaric polynomials,  generalized Fibonacci polynomials,   matrix representation, Stirling operators, multiplicative arithmetic functions.

\MSC[2010] Primary 11B39, Secondary 11B75, 11N99, 05E05.
\end{keyword}

\end{frontmatter}


\section{Introduction}

In 1975,  Carroll and  Gioia \cite{CG} gave a direct construction for an adjoining $q$-th roots, $q\in \mathbb{Q}$, to the group of multiplicative arithmetic functions (MF) under  Dirichlet product.  In 2000, MacHenry \cite{M4} gave a somewhat more general proof of the same result. In 2005, MacHenry and Tudose  \cite{MT1} constructed the injective hull of the generalized Fibonacci polynomials (GFPs) and extended this construction to the injective hull of  the \emph{WIP-module},  that is,  the $\mathbb{Z}$-module of all sequences of weighted isobaric polynomials  (WIPs) with convolution product.  The isobaric polynomials are the symmetric polynomials over the elementary  symmetric polynomial (ESP) basis;  the isobaric ring is isomorphic to the ring of symmetric polynomials. In 2012, MacHenry and Wong \cite{MW2} showed that GFPs together with the convolution product give  a faithful representation of the group of MF under the Dirichlet product,  which in turn induces the embedding of the group  MF into its injective hull,  that is,  adjoins a $q$-th root to each multiplicative arithmetic function for all non-zero rational numbers $q$ in $ \mathbb{Q}.$ 

In 2013,  Li and MacHenry \cite{LM2} gave two matrix representations of WIPs in terms of Hessenberg matrices;  they showed that the determinant of one matrix is the permanent of the other,  and the determinant and permanent is an element in WIP-module.

In this paper,  we use Hessenberg matrices to give matrix representations of the $q$-th, $q\in\mathbb{Q}$, convolution roots of  GFPs,  both in terms of determinants and in terms of permanents.   The main result of this paper is Theorem \ref{my theoremRoots} in Section \ref{rootsGFP}. We introduce  $B_j = q(q+1)\cdots(q+j)$   and   $B_{-j} = q(q-1)\cdots(q-j)$, the \emph{Stirling operators of 1st kind and 2nd kind}, respectively. Then we get Corollary \ref{my corollaryStirlingOperators}:\\
$F^q_{k,n}$ is the determinant of the following matrix:
$$ \left(\begin{array}{cccccc}
B_0t_1 & -1 & 0 &0&  \cdots & 0 \\
B_0t_2 & \frac{1}{2} \frac{B_1}{B_0} t_1 & -1 &0&\cdots & 0 \\
B_0t_3 &   \frac{1}{3}(2 \frac{B_1}{B_0}-1) t_2 &  \frac{1}{3} \frac{B_2}{B_1} t_1& -1& \cdots & 0  \\
\vdots & \vdots & \vdots &\vdots& \ddots & \vdots\\
B_0t_n&   \hbox{\scriptsize $\frac{1}{n}\Big( (n-1)\frac{B_1}{B_0}-(n-2)\Big)t_{n-1}$}&  \hbox{\scriptsize $ \frac{1}{n}\Big( (n-2)\frac{B_2}{B_1}-2(n-3)\Big) t_{n-2}$}&  \hbox{\scriptsize $ \frac{1}{n}\Big( (n-3)\frac{B_3}{B_2}-3(n-4)\Big) t_{n-2}$}& \cdots &  \frac{1}{n}\Big( \frac{B_{n-1}}{B_{n-2}}\Big) t_1  \end{array}\right),$$
{\it i.e.}, the recursion coefficients are $$s_j = \frac{1}{n} \Big(j\frac{B_{n-j}}{B_{n-j-1}}-(n-j)(j-1)\Big)t_{j}, \mbox{ for } j=1,\ldots,n-1,  \mbox{ and } s_n=B_0t_n.$$ 
We call these representations \textit{Hessenberg-Stirling representations.} 

In order to  produce the convolution roots of GFPs and WIPs, we use the Stirling operators of the first kind and second kind.
 So far, we know of no such applications using the Stirling generating functions.  We would like to point out the unexpected usefulness of the Stirling operators. They give a complete answer to the construction of the rational roots of the  group of multiplicative arithmetic functions under the Dirichlet product  \cite{CG}, which is a concern arising in arithmetic number theory.  We describe how GFPs are used to produce an isomorphism from the group generated by GFPs under the convolution product to the group of multiplicative arithmetic functions under Dirichlet product \cite{LM1}.     The usefulness of Stirling operators in this case suggests that adjoining roots (and powers) to other algebraic structures may also be achieved by using Stirling operators.  Also the Stirling operators may have wider applications,  say, to other groups.  
 
 This paper is organized as follows. In Section \ref{isopoly}, we review basic facts about isobaric polynomials. In Section \ref{Hessenbergrep}, we review the Hessenberg representations of WIPs. In Section \ref{rootsGFP}, we construct matrix representations of roots of GFPs using Stirling operators. In Section \ref{WIP-MF}, we review the isomorphism from the group generated by GFPs to the group of multiplicative arithmetic functions. We thus have matrix representations of the elements in the divisible closure of MF under Dirichlet product.

 \section{Isobaric Polynomials}\label{isopoly}

An \emph{isobaric polynomial} in $k$ variables $\{t_1,\ldots,t_k\}$ of degree $n$ is of this form
$$P_{k,n} = \sum_{ \alpha   \vdash  n} C_\alpha t_1^{\alpha_1}  t_2^{\alpha_2}\cdots t_k^{\alpha_k}, $$
where $C_\alpha \in \mathbb{Z}$ and $\alpha= (\alpha_1,\alpha_2,\ldots,\alpha_k) \vdash n$ means that $(1^{\alpha_1}, 2^{\alpha_2}, \ldots, k^{\alpha_k}) $ is a partition of  $n$ with  $\displaystyle\sum_{j=1}^k j\alpha_j = n$. One can think of an isobaric polynomial as a symmetric polynomial written on the elementary symmetric polynomial (ESP) basis.

A sequence of \textit{weighted isobaric polynomials} of weight  $\omega = (\omega_1, \omega_2,\ldots, \omega_j, \ldots )$ with $\omega_j \in\mathbb{Z}$ is defined by  
$$P_{\omega,k,n} = \sum_{\alpha \vdash n} \left(\begin{array}{c}|\alpha| \\\alpha_1,\ldots,\alpha_k\end{array}\right) \frac{\sum \alpha_i\omega_i}{|\alpha|} t_1^{\alpha_1}  t_2^{\alpha_2}\cdots t_k^{\alpha_k },$$
where $|\alpha|=\alpha_1+\alpha_2+\cdots+\alpha_k.$ The union of elements of all sequences of weighted isobaric polynomials is the set of all isobaric polynomials.
The indexing set $\{n\} $ for these polynomials is the set of integers,  positive, negative and  zero, \textit{ i.e.}, $n \in\mathbb{Z}$.  In particular,  $P_{\omega,k,0} = \omega_k, k \geqslant 1,$  and $ P_{\omega,k,0} =1, k=0$  \cite{LM1}.

Note that the monomials are indexed by the partitions $(1^{\alpha_1}, 2^{\alpha_2}, \ldots,k^{\alpha_k})$ with parts no larger than $k$. Moreover,  the elements in a sequence of weighted isobaric polynomials occur in linear recursions
$$P_{\omega,k,n} =  t_1P_{\omega,k,n-1} + t_2 P_{\omega,k,n-2} + \cdots + t_j P_{\omega,k,n-j} + \cdots + t_kP_{\omega,k,n-k},$$  with respect to the recursion parameters  $ [t_1,\ldots, t_k] $.

Two importance  sequences are the generalized Fibonacci polynomials (GFPs)
$$F_{k,n} = \sum_{\alpha \vdash n} \left(\begin{array}{c}|\alpha| \\\alpha_1,\ldots,\alpha_k\end{array}\right) t_1^{\alpha_1}  t_2^{\alpha_2}\cdots t_k^{\alpha_k },$$
where the weight vector is $\omega = (1,1,\ldots,1\ldots)$ with  $F_{k,0} = 1$, and the generalized Lucas polynomials (GLPs)
$$G_{k,n} = \sum_{\alpha \vdash n} \left(\begin{array}{c}|\alpha| \\\alpha_1,\ldots,\alpha_k\end{array}\right) \frac{n}{|\alpha|} t_1^{\alpha_1}  t_2^{\alpha_2}\cdots t_k^{\alpha_k },$$
where the weight vector is  $\omega = (1,2,\ldots,j,\ldots) $ and  $G_{k,0} = k.$

\begin{remark}  The GFPs are the complete symmetric polynomials written on the ESP basis,  and the GLPs are the power sum symmetric polynomials written on the ESP basis; each of these sequences of polynomials is a basis for the ring of symmetric polynomials.
\end{remark}

\begin{remark}  The WIPs in general have special significance in the ring of symmetric polynomials.  In order to see how this comes about,  it is convenient to consider the notation $ [t_1,\ldots, t_k] $ used above  to indicate recursion parameters.  More generally,  we also use $ [t_1,\ldots, t_k] $ to indicate the monic polynomial  $\mathcal{C}(X) = X^k -t_1X^{k-1}-\cdots-t_k,$
that is, $$[t_1,\ldots, t_k] = X^k -t_1X^{k-1}-\cdots-t_k.$$
When we consider  $t_j$'s  as variables,  we often refer to $\mathcal{C}(X) $ as the \textit{generic core},  and when we evaluate   $t_j$'s over the ring of integers,  the term \textit{numerical core}  will   be used.  
\end{remark}

\begin{remark}  It is trivial to verify that   when $k = 2$,   the GFPs are generalizations of the classical ``generalized Fibonacci polynomials'' and the GLPs are generalizations of the classical ``generalized Lucas polynomials' \cite{ACMS};  when $t_1=  t_2=1$,  the GFPs and GLPs become the classical Fibonacci and Lucas sequences. It is also surprising that these older terms persist in the current literature in competition with the true generalizations.  
\end{remark}

Next,  we consider the \emph{companion matrix} of  $[t_1,\ldots, t_k] = X^k -t_1X^{k-1}-\cdots-t_k$,  namely,  the $k\times k$-matrix
$$ A_k=\left(\begin{array}{ccccc} 0& 1 & 0 & 0 & 0 \\0 & 0 & 1 & 0 & 0 \\\vdots & \vdots & \ddots & \vdots & \vdots \\0 & 0 & \cdots & 0 & 1 \\t_k & t_{k-1} & \cdots & t_2 & t_1\end{array}\right).$$
We use  $A_k$ to construct the following infinite  matrix by appending the orbit of the row vectors generated by letting    $A_k$  act on the right of the last row vector in  $A_k$,  and repeating the process on the successive last row vectors.  Noting that $A_k$ is non-singular exactly when  $t_k \neq 0$ and adding this as an assumption,  we can perform the analogous operation on the first row vector of   $A_k$,  extending the rows northward,  yielding a doubly infinite matrix with  $k$ columns.
 We call this the \textit{infinite companion matrix},  and denote it by  $A_k^\infty$,  or simply as $A^\infty$ when the  $k$  is clear.    Since it is completely determined by the polynomial  $\mathcal{C}(X) =[t_1,\ldots, t_k] $  we call $C(X)$  the \textit{core polynomial}.
  $${\Large A_k^\infty} = \left( \begin{matrix}

     \vdots &  \ddots & \vdots & \vdots \\
      (-1)^{k-1}S_{(-2,1^{(k-1)})} & \cdots  & -S_{(-2,1)} & S_{(-2)}  \\
      (-1)^{k-1}S_{(-1,1^{(k-1)})} & \cdots  & -S_{(-1,1)} & S_{(-1)}  \\
      (-1)^{k-1}S_{(0,1^{(k-1)}))}  &\cdots   &  -S_{(0,1)} & S_{(0)}  \\
      (-1)^{k-1}S_{(1,1^{(k-1)})}  & \cdots  &  -S_{(1,1)} & S_{(1)}  \\
      (-1)^{k-1}S_{(2,1^{(k-1)})}  & \cdots   &  -S_{(2,1)} & S_{(2)}  \\
      (-1)^{k-1}S_{(3,1^{(k-1)})}  & \cdots   &  -S_{(3,1)} & S_{(3)}  \\
      (-1)^{k-1}S_{(4,1^{(k-1)})} & \cdots   &  -S_{(4,1)} & S_{(4)}  \\
                    \vdots                      &  \ddots    &    \vdots     &    \vdots

   \end{matrix} \right) =  {\Big((-1)^{k-j} S_{(n,1^{k-j})}\Big).} $$
As pointed out in a number of previous papers (\textit{e.g.}, \cite{LM1})  the matrix $A_k^\infty$ has the following remarkable properties:
\begin{itemize}
  \item  The $k\times k$ contiguous blocks of  $A^\infty$ are the successive powers in the free abelian group generated by the companion matrix   $A_k$.
  \item The rows of $A^\infty$ give a vector representation of the successive powers of  the zeros of the core polynomial. This is essentially a consequence of the Hamilton-Cayley theorem. 
  \item The right hand column of   $A^\infty$ is just the GFPs. 
  \item 
The traces of the $k\times k$ contiguous blocks give in succession the GLPs. 
\item The  $k$ columns of $A^{\infty}$ are linearly recursive with respect to the coefficients of the core polynomial as recursion parameters.        
\item The columns of $A^\infty$ are sequences of  weighted isobaric polynomials with weights $\pm
(0,\ldots,0,1,1,\ldots,1,\ldots).$
\item The elements of $A^\infty$ are Schur-hook polynomials  $S_{(n,1^r)}$  of arm-length $n-1$ and leg length $r$.  In particular,  $F_{k,n}= S_{(n)}$.
\item  The sequences of WIPs form a free $\mathbb{Z}$-module. The columns of $A^\infty$ form a basis of this module.
\end{itemize}

Moreover,  there is a second matrix that is induced by the core polynomial \cite{LM1}.
Consider the derivative of the core polynomial
$$\mathcal{\mathcal{C}}'(X)= k X^{k-1} -t_1 X^{k-2}-\cdots - t_{k-1},$$
out of which,  we manufacture the vector  $(-t_{k-1}, \ldots, -t_1, k)$.  Again letting the companion matrix  $A_k$ act on this vector on the right and appending the resulting orbit as additional row vectors,  we get a $ k\times k$-matrix, which we call the \textit{different matrix} denoted by $D$.   From $D$ we construct an infinite matrix,
$D^\infty$, as we do for $A^{\infty}$.  We call $D^\infty$ the \textit{infinite different matrix}.  It too has some useful and remarkable properties  \cite{LM1}:

\begin{itemize}
  \item  The determinant  $\det D  = \Delta $, the discriminant of the core polynomial.
  \item  The right hand column of  $D^\infty$ is the sequence  GLPs. 
  \item  There is a bijection $\mathcal{L}$ from  $A^\infty$ to  $D^\infty$ which takes  the element $a_{i,j}$ in $A^\infty$ to $d_{i,j}$ in $D^\infty$,  which has the properties of a logarithm on elements, and which implies that $\mathcal{L} (F_{k,n}) = G_{k,n} $.
 \item  The columns of $D^\infty$ are linear recursions with recursion parameters  $\{t_1, \ldots,t_k\}$.
\end{itemize}


Next,  we would like to point out in what way the sequences discussed here are important.  In a series of papers \cite{MT1,MT2,MW1,MW2,LM1,LM2} it has been shown that  subgroups of the ring of arithmetic functions, namely. the Dirichlet group of multiplicative arithmetic functions,  and the additive group of additive arithmetic  functions have faithful representations using the GFP sequence and the GLP sequence;  they also show up in the character theory of the symmetric groups and P\'olya's  Theory of Counting \cite{P,LM1}.  In the following section,  we will recall the matrix representations of the GFP, the GLP and in general, the WIP sequences \cite{LM2},  which give an explicit algorithm to compute these sequences and are useful for calculation.

But first it is convenient to introduce the convolution product of weighted isobaric polynomials.

\begin{definition}[\cite{LM1}]  Let  $P_{\omega, k,n}$ and $P_{\upsilon, k,n}$ be  weighted isobaric polynomials of isobaric degree $n$. Define  the \emph{convolution product} of  $P_{\omega,k,n}$ and $P_{\upsilon, k,n}$ by  $$P_{\omega,k,n} \ast P_{\upsilon,k,n} =  \sum_{j=0}^n  P_{\omega,k,j}  P_{\upsilon,k,n-j}.$$
\end{definition}

Note that the product is also a weighted isobaric polynomial of isobaric degree $n$.  In the case where we have two integer evaluations  of   $P_{\omega, k,n}$ and $P_{\upsilon, k,n}$, we denote them as, respectively,  $P'$ and $P''$,  and their numerical convolution product is    $$P'_{\omega,k,n} \ast P''_{\upsilon,k,n} =  \sum_{j=0}^n  P'_{\omega,k,j}  P''_{\upsilon,k, n-j}. $$  
It is with respect to this product and the ordinary addition of polynomials that the  \emph{logarithm operator} $\mathcal{L}$  is defined (see more details in\cite{LM1}).

\section{Permanent and Determinant Representations}\label{Hessenbergrep}

A formula was given for the elements  of the divisible closure of the WIP-module \cite{MT1}, \textit{i.e.},  each element in  WIP-module was given a $q$-th root for all   $q \in\mathbb{Q}$,  where these roots are unique up to sign.   Two interesting representations of the elements of  WIP-module were given in terms of determinants and  permanents of the following Hessenberg matrices \cite{LM2}:
 $$H_{+(\omega,k,n)} = \left(\begin{matrix}
t_1 & 1 & 0 & \cdots & 0 \\
t_2 & t_1 & 1 & \cdots & 0 \\
\vdots & \vdots & \vdots & \ddots & \vdots \\
t_{n-1} & t_{n-2} & t_{n-3} & \cdots & 1 \\
\omega_n t_n & \omega_{n-1} t_{n-1} & \omega_{n-2}t_{n-2}  & \cdots & \omega_1t_1
\end{matrix} \right),$$
and
$$H_{-(\omega,k,n)} = \left(\begin{matrix}
t_1 & -1 & 0 & \cdots & 0 \\
t_2 & t_1 & -1 & \cdots & 0 \\
\vdots & \vdots & \vdots & \ddots & \vdots  
t_{n-1} & t_{n-2} & t_{n-3} & \cdots & -1 \\
\omega_n t_n & \omega_{n-1} t_{n-1} & \omega_{n-2}t_{n-2}  & \cdots & \omega_1t_1
\end{matrix} \right).$$
The principal results are that   $$\perm H_{+(\omega, k, n)} = P_{\omega,k,n} =\det H_{-(\omega,k,n)}.$$

For example, we look at the following matrix when  $n=4$
$$ \left( \begin{matrix}
t_1 & 1 & 0 & 0 \\ t_2 & t_1 & 1 & 0 \\ t_3 & t_2 & t_1 & 1 \\ \omega_4 t_4 & \omega_3 t_3 & \omega_2 t_2 & \omega_1 t_1
\end{matrix}\right)$$
whose permanent is easily seen to be \; $\omega_1t_1^4 + (2\omega_1 +\omega_2)t_1^2t_2 + \omega_2t_2^2 + (\omega_1 + \omega_3)t_1t_3 +\omega_4t_4 = P_{\omega,4,4}.$

Moreover,  it is easy to see that there is a nesting of the Hessenberg matrices from lower right hand corner to upper left. We call these representations  \textit{Hessenberg representations}. It turns out that we can use these to go further and give two useful representations of the $q$-th convolution roots of  generalized Fibonacci polynomials in terms of Hessenberg matrices.


\section{Convolution Roots}\label{rootsGFP}

MacHenry and Tudose \cite[Theorems 5.1 and 5.7]{MT1} gave a general expression for the $q$-th, $q \in \mathbb{Q},$ convolution roots of the GFPs,  and a more general expression for the  $q$-th convolution roots  of  WIPs.  

The formula for $q$-th roots of  polynomials in  GFP
is given by  $$F^q_{k,n} =  \sum_{\alpha \vdash n}\frac{1}{|\alpha|!} B_{|\alpha|-1} \left(\begin{array}{c}|\alpha| \\\alpha_1,\ldots,\alpha_k\end{array}\right) t_1^{\alpha_1}  t_2^{\alpha_2}\cdots t_k^{\alpha_k }.$$
For $n=3$ and $n=4$,  we have the following determinantal representations
$$F^q_{k,3}=  \det \left(\begin{array}{ccc}qt_1 & -1 & 0 \\qt_2 & \frac{1}{2}(q+1)t_1 & -1 \\qt_3 &\frac{1}{3}(2q+1)t_2 & \frac{1}{3}(q+2)t_1\end{array}\right)$$ and
$$F^q_{k,4} = \det \left(\begin{array}{cccc}qt_1 & -1 & 0 & 0 \\qt_2 & \frac{1}{2}(q+1)t_1 & -1 & 0 \\qt_3 & \frac{1}{3}(2q+1)t_2 & \frac{1}{3}(q+2)t_1 & -1 \\qt_4 & \frac{1}{4}(3q+1)t_3 & \frac{1}{4}(2q+2)t_2 & \frac{1}{4}(q+3)t_1\end{array}\right),$$
 where  $B_j $ is   the polynomial generating function for the Stirling numbers of the 1st kind evaluated at $q$. ($B_{-j}$ the analogue,  determined by the polynomial generating function for Stirling numbers of the 2nd kind);  namely,  $B_j = q(q+1)\cdots(q+j)$   and   $B_{-j} = q(q-1)\cdots(q-j)$.  We call $B_j$ and $B_{-j}$ \emph{Stirling operators of 1st kind and 2nd kind}, respectively. 
 
 The main theorem of this paper is a generalization to arbitrary $n$ of the two matrices which appear above.

The first five such roots,  starting with  $F_{k,0}^{q}$ for an arbitrary  $q$ are

$\begin{array}{rcl}
  F_{k,0}^{q}& =& 1\\ 
  F_{k,1}^{q}& = &qt_1 \\
   F_{k,2}^{q}& = &\frac{1}{2}q(q+1)t_1^2 + qt_2\\ 
  F_{k,3}^{q}& = &\frac{1}{3!}q(q+1)(q+2)t_1^3 + q(q+1)t_1t_2+qt_3\\
   F_{k,4}^{q}& =&  \frac{1}{4!}q(q+1)(q+2)(q+3)t_1^4 +  \frac{1}{2}q(q+1)(q+2)t_1^2t_2 + \frac{1}{2}q(q+1)t_2^2 + q(q+1)t_1t_3 +qt_4\\
   F_{k,5}^{q}& = &\frac{1}{5!}q(q+1)(q+2)(q+3)(q+4)t_1^5 +  \frac{1}{6}q(q+1)(q+2)(q+3)t_1^3t_2 +  \frac{1}{2}q(q+1)(q+2)t_1t_2^2 \\
&&  +\frac{1}{2}q(q+1)(q+2) t_1^2t_3 +q(q+1) t_2t_3 + q(q+1) t_1t_4 + qt_5
\end{array}$
\\and in the Stirling operator notation,  these translate into:

$\begin{array}{rcl}
F^{q}_{k,0} &=& 1\\
F^{q}_{k,1} &=& B_0t_1\\
F^{q}_{k.2} &=& \frac{1}{2!} B_1 t^2_1 + B_0t_2\\
 F^{q}_{k, 3}& =& \frac{1}{3!} B_2 t^3_1 + \frac{1}{2!} 2 B_1 t_1 t_2+ B_0t_3\\
 F^{q}_{k,4}  &=& \frac{1}{4!} B_3 t^4_1 + \frac{1}{3!} 3 B_2 t^2_1 t_2+ \frac{1}{2!}B_1t^2_2+\frac{1}{2!}2B_1t_1t_3+B_0t_4\\
F^{q}_{k,5}  &=& \frac{1}{5!} B_4 t^5_1 + \frac{1}{4!} 4 B_3 t^3_1 t_2 + \frac{1}{3!}3 B_2t_1 t^2_2 + 
\frac{1}{3!} 3B_2 t^2_1t_3 +\frac{1}{2!}2 B_1 t_2t_3 + \frac{1}{2!}2 B_1t_1t_4 + B_0t_5
\end{array}$

A rule of thumb for writing the $q$-th convolution roots is as follows:  First write the polynomial $F_n$
as a function of $t_j, j = 1, \ldots, k$,  then, observing the exponent sum $|\alpha|$, monomial by monomial,  enter the fraction $ \frac{1}{|\alpha|!}$, and the Stirling operators $B_{|\alpha|- 1}$.  There will usually be some cancellations among the fractions for the most economical expression.

For example, $$F_{k,3} = t^3_1 + 2t_1t_2+t_3$$  and
$$ F^{q}_{k, 3} = \frac{1}{3!} B_2 t^3_1 + \frac{1}{2!} 2 B_1 t_1 t_2+ B_0t_3.$$

\begin{theorem}\label{my theoremRoots}   
 $$\begin{array}{rcl}
F^q_{k,n} &=&\det \left(\begin{array}{ccccc}qt_1 & -1 & 0 & \cdots & 0 \\qt_2 & \frac{1}{2}(q+1)t_1 & -1 & \cdots & 0 \\\vdots & \vdots & \vdots & \ddots & \vdots \\qt_{n-1} 
& \frac{1}{n-1}((n-2)q+1)t_{n-2}& \frac{1}{n-1}((n-3)q+2)t_{n-3}& \cdots & -1\\qt_n & \frac{1}{n}((n-1)q+1)t_{n-1} & \frac{1}{n}((n-2)q+2)t_{n-2} & \cdots & \frac{1}{n}(q+(n-1))t_1\end{array}\right)\\
&&\\
 &=&\perm \left(\begin{array}{ccccc}qt_1 & 1 & 0 & \cdots & 0 \\qt_2 & \frac{1}{2}(q+1)t_1 & 1 & \cdots & 0 \\\vdots & \vdots & \vdots & \ddots & \vdots \\qt_{n-1} 
& \frac{1}{n-1}((n-2)q+1)t_{n-2}& \frac{1}{n-1}((n-3)q+2)t_{n-3}& \cdots & 1\\qt_n & \frac{1}{n}((n-1)q+1)t_
{n-1} & \frac{1}{n}((n-2)q+2)t_{n-2} & \cdots & \frac{1}{n}(q+(n-1))t_1\end{array}\right).
\end{array}$$

\end{theorem}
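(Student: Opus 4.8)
The plan is to show that the determinant, the permanent and the polynomial $F^q_{k,n}$ all satisfy one and the same linear recursion in $n$ with the same value at $n=0$, and then to conclude by induction. \emph{Step 1 (a recursion for $F^q_{k,n}$).} I would first recast the MacHenry--Tudose formula as a generating function: since $B_{|\alpha|-1}=q(q+1)\cdots(q+|\alpha|-1)$ has $|\alpha|$ factors, one has $\tfrac{1}{|\alpha|!}B_{|\alpha|-1}=\binom{q+|\alpha|-1}{|\alpha|}$, and expanding $\big(1-\sum_{j\ge1}t_jz^{j}\big)^{-q}=\sum_{m\ge0}\binom{q+m-1}{m}\big(\sum_{j}t_jz^{j}\big)^{m}$ by the multinomial theorem and collecting the coefficient of $z^{n}$ reproduces $F^q_{k,n}$ exactly. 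Hence $\mathcal G(z):=\sum_{n\ge0}F^q_{k,n}z^{n}=\mathcal T(z)^{-q}$ where $\mathcal T(z):=1-\sum_{j\ge1}t_jz^{j}$ (and $t_j=0$ for $j>k$). Logarithmic differentiation gives $\mathcal G'(z)\,\mathcal T(z)=q\,\mathcal G(z)\sum_{j\ge1}jt_jz^{j-1}$, and comparing coefficients of $z^{n-1}$ on both sides yields
$$nF^q_{k,n}=\sum_{j\ge1}\big((n-j)+qj\big)t_jF^q_{k,n-j},$$
that is, $F^q_{k,n}=\sum_{j=1}^{n}\tfrac{qj+(n-j)}{n}\,t_j\,F^q_{k,n-j}$ with $F^q_{k,0}=1$.

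\emph{Step 2 (the determinant).} Let $M_n$ be the $n\times n$ matrix in the first formula of Theorem~\ref{my theoremRoots} and put $\Phi_n:=\det M_n$, $\Phi_0:=1$. The matrix $M_n$ is lower Hessenberg with every superdiagonal entry equal to $-1$; expanding $\det M_n$ along its last row and observing that, after deleting the last row and the $i$-th column, the block below-and-right of the deletion is lower triangular with the superdiagonal $-1$'s on its diagonal (while the block above-and-right is zero), one obtains the usual Hessenberg recursion in which the sign $(-1)^{n+i}$ from the Laplace expansion is cancelled by the $(-1)^{n-i}$ coming from that triangular block, namely $\Phi_n=\sum_{i=1}^{n}a_{n,i}\,\Phi_{i-1}$, where $a_{n,i}$ is the $(n,i)$ entry. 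Reading the last row of $M_n$ off, $a_{n,i}=\tfrac1n\big((n-i+1)q+(i-1)\big)t_{n-i+1}$ for all $i$ (the first column $a_{n,1}=qt_n$ being the case $i=1$), so the substitution $m=n-i+1$ turns the recursion into $\Phi_n=\sum_{m=1}^{n}\tfrac1n\big(mq+(n-m)\big)t_m\,\Phi_{n-m}$, which is exactly the recursion of Step~1. Since also $\Phi_0=1=F^q_{k,0}$, induction on $n$ gives $\det M_n=F^q_{k,n}$.

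\emph{Step 3 (the permanent).} Let $M_n^{+}$ be $M_n$ with each superdiagonal $-1$ replaced by $+1$. Expanding the permanent along the last row and using the same block description (the deleted matrix now being block lower triangular with an all-$1$ triangular block of permanent $1$) gives $\perm M_n^{+}=\sum_{i=1}^{n}a_{n,i}\,\perm M_{i-1}^{+}$ with $\perm M_0^{+}=1$ --- again the recursion of Step~1 --- so $\perm M_n^{+}=F^q_{k,n}$. Equivalently one can argue, as behind $\perm H_{+(\omega,k,n)}=\det H_{-(\omega,k,n)}$ in Section~\ref{Hessenbergrep}, that in either $\det M_n$ or $\perm M_n^{+}$ only the permutations that factor into cyclic blocks on consecutive indices contribute, that such a permutation has sign $(-1)^{s}$ with $s$ the number of superdiagonal positions it uses, and that these are exactly the $s$ factors $-1$ picked up from the superdiagonal of $M_n$; hence $\det M_n=\perm M_n^{+}$ term by term.

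\emph{The main obstacle} is Step~1: one must recognise the rising factorials $B_{|\alpha|-1}$ as binomial coefficients in order to get the compact generating function $\mathcal T^{-q}$, and then extract the recursion with the specific coefficient $\tfrac{qj+(n-j)}{n}$. The point of the whole argument is that the Hessenberg entries $\tfrac1n\big((n-i+1)q+(i-1)\big)$ are manufactured precisely so as to produce this coefficient after the reindexing $m=n-i+1$, with the first column $qt_n$ fitting the same uniform formula; granting this, the two inductions and the determinant/permanent comparison are routine bookkeeping.
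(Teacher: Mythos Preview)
Your proof is correct and follows the same overall architecture as the paper: both arguments expand the Hessenberg determinant (and permanent) along the last row to obtain the recursion $\Phi_n=\sum_{j=1}^{n}\tfrac{1}{n}(jq+(n-j))t_j\,\Phi_{n-j}$, and then show that $F^q_{k,n}$ satisfies the same recursion with the same initial value, concluding by induction.

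The one genuine difference is in how the recursion for $F^q_{k,n}$ is established. The paper verifies it combinatorially at the level of monomial coefficients: for each $\alpha\vdash n$ it checks directly that
\[
\frac{B_{|\alpha|-1}}{\alpha_1!\cdots\alpha_k!}=\sum_{i=1}^{k}\frac{iq+(n-i)}{n}\,\frac{B_{|\alpha|-2}}{\alpha_1!\cdots(\alpha_i-1)!\cdots\alpha_k!},
\]
using $\sum_i i\alpha_i=n$, $\sum_i\alpha_i=|\alpha|$, and $B_{|\alpha|-2}(q+|\alpha|-1)=B_{|\alpha|-1}$. You instead package the whole family into the generating function $\mathcal G(z)=\big(1-\sum_j t_jz^j\big)^{-q}$ and read off the recursion by logarithmic differentiation. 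Your route is slicker and makes the origin of the coefficient $\tfrac{jq+(n-j)}{n}$ transparent (it is simply the split of $n$ coming from $\mathcal G'\mathcal T=-q\mathcal T'\mathcal G$); the paper's route, on the other hand, stays entirely within the partition-indexed description and makes the role of the Stirling operators $B_j$ explicit, which is thematically what the paper is after. Your Step~3 also adds a clean bijective explanation of $\det M_n=\perm M_n^{+}$ that the paper only gestures at.
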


\proof  Note that  the determinants and permanents are nested,  that is,  $F^q_{k,j} $ is the $ j \times j  $ principal minor in the upper left hand corner of  the matrices. This allows us to use induction in our proof.  We shall carry out the computations for the determinant case.  The proof for permanent case is similar. 

\begin{lemma}\label{mylemmatheorem1proof}  $F^{q}_{k,n}$ satisfies the recursive formula $$F^{q}_{k,n} = s_1F^{q}_{k,n-1} + s_2F^{q}_{n-2} + s_3F^{q}_{k,n-3} + \cdots + s_{n-1}F^{q}_{k,1} + s_nF^{q}_{k,0},$$ where the recursion parameters $s_j= \frac{1}{n}(jq+n-j)t_{j}$.

\end{lemma}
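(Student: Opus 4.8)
The plan is to deduce the recursion from the closed formula
$F^q_{k,n}=\sum_{\alpha\vdash n}\frac{1}{|\alpha|!}B_{|\alpha|-1}\binom{|\alpha|}{\alpha_1,\dots,\alpha_k}t_1^{\alpha_1}\cdots t_k^{\alpha_k}$ given above. Clearing the denominator $n$, it suffices to prove the polynomial identity
$nF^q_{k,n}=\sum_{j=1}^{k}(jq+n-j)\,t_j\,F^q_{k,n-j}$,
since this rearranges at once into $F^q_{k,n}=\sum_{j=1}^{n}s_jF^q_{k,n-j}$ with $s_j=\frac1n(jq+n-j)t_j$; the two index ranges match because $t_j=0$ for $j>k$ and $F^q_{k,m}=0$ for $m<0$ (as a formal power series in the grading variable). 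I will carry this out by a direct manipulation of the partition sum, and then note the shorter generating-function route.

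For the direct route, I first expand the right-hand side: multiplying $t_j$ into a monomial of $F^q_{k,n-j}$ sends each partition $\beta\vdash n-j$ to $\alpha=\beta+e_j\vdash n$ with $|\alpha|=|\beta|+1$, so after collecting the coefficient of a fixed monomial $t_1^{\alpha_1}\cdots t_k^{\alpha_k}$ and summing over all $j$ with $\alpha_j\ge 1$, the right-hand side coefficient becomes $\sum_{j}(jq+n-j)\,\frac{1}{(|\alpha|-1)!}B_{|\alpha|-2}\binom{|\alpha|-1}{\alpha-e_j}$. Next I simplify the combinatorial factors using $\binom{|\alpha|-1}{\alpha-e_j}=\frac{\alpha_j}{|\alpha|}\binom{|\alpha|}{\alpha}$ and the Stirling-operator shift $B_{|\alpha|-2}=B_{|\alpha|-1}/(q+|\alpha|-1)$, which turns the coefficient into $\frac{B_{|\alpha|-1}}{(q+|\alpha|-1)\,|\alpha|!}\binom{|\alpha|}{\alpha}\sum_{j}\alpha_j(jq+n-j)$. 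The crux is then the elementary identity $\sum_{j}\alpha_j(jq+n-j)=q\sum_j j\alpha_j+n\sum_j\alpha_j-\sum_j j\alpha_j=qn+n|\alpha|-n=n(q+|\alpha|-1)$, valid because $\sum_j j\alpha_j=n$ and $\sum_j\alpha_j=|\alpha|$. The factor $q+|\alpha|-1$ cancels, leaving exactly $n\,\frac{B_{|\alpha|-1}}{|\alpha|!}\binom{|\alpha|}{\alpha}$, which is the coefficient of that monomial in $nF^q_{k,n}$; summing over $\alpha\vdash n$ completes the proof.

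Alternatively, since $F^q_{k,\bullet}$ is the $q$-th convolution root of the GFP sequence \cite{MT1} and $\sum_n F_{k,n}x^n=\bigl(1-\sum_{j=1}^k t_jx^j\bigr)^{-1}$, one has $f(x):=\sum_n F^q_{k,n}x^n=g(x)^{-q}$ with $g(x)=1-\sum_{j=1}^k t_jx^j$; then $g\,f'=-q\,g'\,f$, and extracting the coefficient of $x^{n-1}$ gives $nF^q_{k,n}-\sum_j(n-j)t_jF^q_{k,n-j}=q\sum_j jt_jF^q_{k,n-j}$, i.e.\ the same identity. I do not expect a genuine obstacle here: the only substantive point is the cancellation of $q+|\alpha|-1$ between the shifted Stirling operator and the telescoped multinomial factor (equivalently, the identity $\sum_j\alpha_j(jq+n-j)=n(q+|\alpha|-1)$), and the remaining difficulty is purely bookkeeping — tracking partitions having a part equal to $j$, the vanishing of $F^q_{k,m}$ for $m<0$, and the reconciliation of the summation limit $n$ in the statement with the natural limit $k$.
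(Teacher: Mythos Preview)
Your proof is correct and follows essentially the same route as the paper: both reduce the recursion to the coefficient identity $\sum_j \alpha_j(jq+n-j)=n(q+|\alpha|-1)$, combined with the Stirling-operator shift $B_{|\alpha|-2}(q+|\alpha|-1)=B_{|\alpha|-1}$, after collecting the coefficient of $t_1^{\alpha_1}\cdots t_k^{\alpha_k}$ on each side. Your generating-function alternative via $gf'=-qg'f$ is a clean extra not in the paper, but the primary argument is the same computation.
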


\begin{proof}  The nesting of the matrices,  and hence of the determinants and permanents, implies the recursion.
Let $
M_n=\det \left(\begin{array}{ccccc}qt_1 & -1 & 0 & \cdots & 0 \\qt_2 & \frac{1}{2}(q+1)t_1 & -1 & \cdots & 0 \\\vdots & \vdots & \vdots & \ddots & \vdots \\qt_{n-1} 
& \frac{1}{n-1}((n-2)q+1)t_{n-2}& \frac{1}{n-1}((n-3)q+2)t_{n-3}& \cdots & -1\\qt_n & \frac{1}{n}((n-1)q+1)t_{n-1} & \frac{1}{n}((n-2)q+2)t_{n-2} & \cdots & \frac{1}{n}(q+(n-1))t_1\end{array}\right) $, 
\\$M_0=1$ and $m_{i,j}$ be the $(i,j)$th entry in the matrix.
 To prove the recursive formula, it is equivalent to prove
$$M_n=m_{n,n}M_{n-1}+m_{n,n-1}M_{n-2}+\cdots+m_{n,2}M_1+m_{n,1}M_0.$$
We do the cofactor expansion along $n$th column from bottom to top and we get:
$$\begin{array}{ccl}
M_n&=&m_{n,n}M_{n-1}+\det \left(\begin{array}{ccccc}
m_{1,1}&-1&0&\cdots&0\\
m_{2,1}&m_{2,2}&-1&\cdots&0\\
\vdots&\vdots&\vdots&\ddots&\vdots\\
m_{n-2,1}&m_{n-2,2}&m_{n-2,3}&\cdots&-1\\
m_{n,1}&m_{n,2}&m_{n,3}&\cdots&m_{n,n-1}\\
\end{array}\right)\\
&&\hbox{

Then do the cofactor expansion along the last column from bottom to top.}\\

&=&m_{n,n}M_{n-1}+m_{n,n-1}M_{n-2}\\
&&\hskip 2cm+\det \left(\begin{array}{ccccc}
m_{1,1}&-1&0&\cdots&0\\
m_{2,1}&m_{2,2}&-1&\cdots&0\\
\vdots&\vdots&\vdots&\ddots&\vdots\\
m_{n-3,1}&m_{n-3,2}&m_{n-3,3}&\cdots&-1\\
m_{n,1}&m_{n,2}&m_{n,3}&\cdots&m_{n,n-2}\\
\end{array}\right)\\

&&\hbox{Continue to do cofactor along the  last column.}\\
&\vdots&\\
&=&m_{n,n}M_{n-1}+m_{n,n-1}M_{n-2}+\cdots+m_{n,2}M_1+m_{n,1}M_0.
\end{array}$$

Let   $s_j = m_{n,n-j+1.}$  We then have
$$M_n = s_1M_{n-1}+s_2M_{n-2}+\cdots+s_{n-1}M_1+s_nM_0.$$  Putting   $F^{q}_{k,n-j} = M_{n-j}$, we assume inductively that  $M_{n-j }  =  F^{q}_{k,n-j},  j = 0,1,\ldots,n-1$, we have $$M_n =  s_1F^{q}_{k,n-1} +s_2F^{q}_{k,n-2} +\cdots+s_{n-1}F^{q}_{k,1} +s_nF^{q}_{k,0} .$$ Now  we  only need to show that  $M_n = F^{q}_{k,n}$.


Recall that $\alpha=(\alpha_1,\alpha_2,\ldots,\alpha_k)\vdash n$ means $\alpha_1+2\alpha_2+\cdots+k\alpha_k=n$ and $|\alpha|=\alpha_1+\alpha_2+\cdots+\alpha_k$.

To prove $ F^{q}_{k,n} = \displaystyle\sum_{\alpha \vdash n}\frac{1}{|\alpha|!} B_{|\alpha|-1} \left(\begin{array}{c}|\alpha| \\\alpha_1,\ldots,\alpha_k\end{array}\right) t_1^{\alpha_1}  t_2^{\alpha_2}\cdots t_k^{\alpha_k }= \sum_{\alpha \vdash n}\frac{B_{|\alpha|-1}}{\alpha_1!\alpha_2!\cdots\alpha_k!}  t_1^{\alpha_1}  t_2^{\alpha_2}\cdots t_k^{\alpha_k }$ is the determinant or permanent of the matrices in Theorem \ref{my theoremRoots}  , we only need to show that $ F^{q}_{k,n} $ satisfies the recursive formula in Lemma \ref{mylemmatheorem1proof}, which is equivalent to showing that 
$$\frac{B_{|\alpha|-1}}{\alpha_1!\alpha_2!\cdots\alpha_k!} =\sum_{i=1}^k\frac{1}{n}(iq+n-i)\frac{B_{|\alpha|-2}}{\alpha_1!\alpha_2!\cdots(\alpha_{i}-1)!\cdots\alpha_k!}. $$
\begin{align*}
&\displaystyle\sum_{i=1}^k\frac{1}{n}(iq+n-i)\frac{B_{|\alpha|-2}}{\alpha_1!\alpha_2!\cdots(\alpha_{i}-1)!\cdots\alpha_k!}\\ =&\displaystyle\frac{(q+n-1)B_{|\alpha|-2}}{n(\alpha_1-1)!\alpha_2!\cdots\alpha_k!}+\frac{(2q+n-2)B_{|\alpha|-2}}{n\alpha_1!(\alpha_2-1)!\cdots\alpha_k!}+\cdots+\frac{(kq+n-k)B_{|\alpha|-2}}{n\alpha_1!\alpha_2!\cdots(\alpha_k-1)!}\\
=&\displaystyle\frac{B_{|\alpha|-2}}{n\alpha_1!\alpha_2!\cdots\alpha_k!}\Big[\alpha_1(q+n-1)+\alpha_2(2q+n-2)+\cdots+\alpha_k(kq+n-k)\Big]\\
=&\displaystyle\frac{B_{|\alpha|-2}}{n\alpha_1!\alpha_2!\cdots\alpha_k!}\Big[\alpha_1q+n\alpha_1-\alpha_1+2\alpha_2q+n\alpha_2-2\alpha_2+\cdots+k\alpha_kq+n\alpha_k-k\alpha_k\Big]\\
=&\displaystyle\frac{B_{|\alpha|-2}}{n\alpha_1!\alpha_2!\cdots\alpha_k!}\Big[(\alpha_1+2\alpha_2+\cdots+k\alpha_k)q+n(\alpha_1+\alpha_2+\cdots+\alpha_k)-(\alpha_1+2\alpha_2+\cdots+k\alpha_k)\Big]\\
=&\displaystyle\frac{B_{|\alpha|-2}}{n\alpha_1!\alpha_2!\cdots\alpha_k!}\Big(nq+n|\alpha|-n\Big)\\
=&\displaystyle\frac{B_{|\alpha|-2}(q+|\alpha|-1)}{\alpha_1!\alpha_2!\cdots\alpha_k!}\\
=&\displaystyle\frac{B_{|\alpha|-1}}{\alpha_1!\alpha_2!\cdots\alpha_k!}\\
\end{align*}
\end{proof}

It is of interest to see the matrix which represents the convolution roots in a form which explicitly displays the Stirling operators $B{_j}$,  which we now do in 

\begin{corollary}\label{my corollaryStirlingOperators} 

$F^q_{k,n}$ is the determinant of the following matrix:

\noindent$ \left(\begin{array}{cccccc}
B_0t_1 & -1 & 0 &0&  \cdots & 0 \\
B_0t_2 & \frac{1}{2} \frac{B_1}{B_0} t_1 & -1 &0&\cdots & 0 \\
B_0t_3 &   \frac{1}{3}(2 \frac{B_1}{B_0}-1) t_2 &  \frac{1}{3} \frac{B_2}{B_1} t_1& -1& \cdots & 0  \\
\vdots & \vdots & \vdots &\vdots& \ddots & \vdots\\
B_0t_n&   \hbox{\scriptsize $\frac{1}{n}\Big( (n-1)\frac{B_1}{B_0}-(n-2)\Big)t_{n-1}$}&  \hbox{\scriptsize $ \frac{1}{n}\Big( (n-2)\frac{B_2}{B_1}-2(n-3)\Big) t_{n-2}$}&  \hbox{\scriptsize $ \frac{1}{n}\Big( (n-3)\frac{B_3}{B_2}-3(n-4)\Big) t_{n-2}$}& \cdots &  \frac{1}{n}\Big( \frac{B_{n-1}}{B_{n-2}}\Big) t_1  \end{array}\right),$
i.e.,  the recursion coefficients are $$s_j = \frac{1}{n} \Big(j\frac{B_{n-j}}{B_{n-j-1}}-(n-j)(j-1)\Big)t_{j}, \mbox{ for } j=1,\ldots,n-1,  \mbox{ and } s_n=B_0t_n.$$ $\square$

\end{corollary}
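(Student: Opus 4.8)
The plan is to show that the matrix displayed in Corollary~\ref{my corollaryStirlingOperators} is literally equal, entry by entry, to the determinant-matrix of Theorem~\ref{my theoremRoots}; the Corollary is then immediate. The only input is the definition of the Stirling operators of the first kind: from $B_m = q(q+1)\cdots(q+m)$ we have $B_0 = q$ and
$$\frac{B_m}{B_{m-1}} = q + m \qquad (m \ge 1).$$

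First I would fix notation for the entries. By the nesting exhibited in the proof of Theorem~\ref{my theoremRoots}, both matrices have the same shape: first column equal to the column vector $(qt_1,\dots,qt_n)$, the superdiagonal equal to $-1$, and $0$ above the superdiagonal. For a remaining entry in row $i$, column $c$ with $2 \le c \le i$, set $\ell := i - c + 1$, so that $1 \le \ell \le i-1$ and $t_\ell$ is the indeterminate appearing there. Since row $i$ reproduces the last row of the $i\times i$ principal block, the entry of the Theorem's matrix is $\frac{1}{i}\big((i-c+1)q + (c-1)\big)t_{i-c+1} = \frac{1}{i}\big(\ell q + (i-\ell)\big)t_\ell$, while the entry of the Corollary's matrix is $\frac{1}{i}\big(\ell\,\frac{B_{i-\ell}}{B_{i-\ell-1}} - (i-\ell)(\ell-1)\big)t_\ell$, and in the first column ($\ell = i$) it is $B_0 t_i = q t_i$.

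Then I would equate the two. Because $1 \le i - \ell \le i - 1$, the identity above applies with $m = i-\ell$, giving
$$\ell\,\frac{B_{i-\ell}}{B_{i-\ell-1}} - (i-\ell)(\ell-1) = \ell\big(q + i - \ell\big) - (i-\ell)(\ell - 1) = \ell q + (i-\ell)\big[\ell - (\ell-1)\big] = \ell q + (i-\ell),$$
so the two entries coincide; in the first column both are $qt_i$, and the superdiagonal $-1$'s and the $0$'s above them are unchanged. Specializing to the last row $i = n$, $\ell = j$ recovers exactly the recursion coefficients $s_j = \frac{1}{n}\big(j\,\frac{B_{n-j}}{B_{n-j-1}} - (n-j)(j-1)\big)t_j = \frac{1}{n}(jq + n-j)t_j$ of Lemma~\ref{mylemmatheorem1proof} for $1 \le j \le n-1$, while the first-column case gives $s_n = B_0 t_n = q t_n$. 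Hence the Corollary's matrix is the Theorem's matrix, and $F^q_{k,n}$ is its determinant by Theorem~\ref{my theoremRoots}.

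I do not anticipate a genuine obstacle here: the content is index bookkeeping together with the one-line identity $B_m/B_{m-1} = q+m$. The two points that require care are (i) the dictionary ``column $c$ of row $i$ carries $t_{\,i-c+1}$'' between the two displays --- in particular, the fourth column of the last row should read $t_{n-3}$, not $t_{n-2}$ as printed --- and (ii) the fact that $s_n$ must be stated separately as $B_0 t_n$, because $B_m/B_{m-1} = q+m$ fails at $m=0$ under the convention $B_{-1} = q(q-1)$, so the generic Stirling formula does not specialize to the first column.
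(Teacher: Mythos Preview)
Your proposal is correct and matches the paper's approach: the paper gives no proof beyond the $\square$, treating the Corollary as an immediate rewriting of Theorem~\ref{my theoremRoots} via $B_m/B_{m-1}=q+m$, and your entry-by-entry verification supplies exactly that implicit computation. Your side remarks---the typo $t_{n-2}\to t_{n-3}$ in the fourth column and the need to state $s_n=B_0t_n$ separately---are also on target.
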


We call these representations \textit{Hessenberg-Stirling representations.} The Stirling part is due to the role that the Stirling operators play in the construction of the roots of the GFPs.

The root formula for the WIPs  is a generalization of the root  formula for the GFP, and is a bit more complicated. 
\begin{theorem}[\cite{MT1}]\label{my theoremWIPs} 
$$P^q_{\omega,k,n}  =  \sum_{\alpha \vdash n}  L_{k,n,\omega}(\alpha) t^{\alpha_1}_1\cdots t^{\alpha_k}_k,   $$
where
$$ L_{\omega,k,n}(\alpha) =  \sum^{|\alpha|-1}_{j=0} \frac{1}{(\Pi_{i=1}^k\alpha_i)!}  \left(\begin{array}{c}|\alpha|-1 \\j\end{array}\right) B_{-j} D_{|\alpha|-j-1}( \omega^{\alpha_1}_1\cdots \omega^{\alpha_k}_k)$$

\noindent and  $D_j(\omega^{\alpha}) =  D_{j} (\omega^{\alpha_1}_1\cdots \omega^{\alpha_k}_k) $ is  the total derivative of the expression  $j$  times.

\end{theorem}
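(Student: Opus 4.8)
The original argument is in \cite{MT1}; the line I would take passes to generating functions, where the convolution product becomes ordinary multiplication of power series, and reduces the claim to a single combinatorial identity for the total-derivative operator $D$.

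The plan is as follows. First I would record the generating function of the WIP sequence itself. Writing $y(x) = t_1 x + t_2 x^2 + \cdots + t_k x^k$ and $w(x) = \omega_1 t_1 x + \omega_2 t_2 x^2 + \cdots + \omega_k t_k x^k$, a direct expansion of the defining sum for $P_{\omega,k,n}$ by the multinomial theorem --- group the monomials by $m = |\alpha|$ and apply the operator $\sum_i \omega_i u_i\,\partial/\partial u_i$, with $u_i = t_i x^i$, to $y(x)^m$ --- gives $\sum_{n\geq 1} P_{\omega,k,n}x^n = w(x)/(1-y(x))$, equivalently $P_{\omega,k,n} = \sum_{i=1}^k \omega_i t_i F_{k,n-i}$. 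In the divisible closure of the WIP-module the $q$-th convolution root is obtained by normalizing the degree-$0$ term to $1$ and raising to the $q$-th power in the multiplicative group of power series with constant term $1$, so $\sum_{n\geq 0} P^q_{\omega,k,n}x^n = \big(1 + w/(1-y)\big)^q$. Expanding by the binomial series and using $\binom{q}{r} = q(q-1)\cdots(q-r+1)/r! = B_{-(r-1)}/r!$, this equals $1 + \sum_{r\geq 1} \frac{B_{-(r-1)}}{r!}\big(\frac{w}{1-y}\big)^{r}$.

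Then I would extract the coefficient of $x^n$. Writing $\big(\frac{w}{1-y}\big)^r = w^r\sum_{l\geq 0}\binom{r+l-1}{l}y^l$ and expanding $w^r$ and $y^l$ by the multinomial theorem, the monomial $t^\alpha$ with $\alpha\vdash n$ arises from the splittings $\alpha = \beta+\gamma$ with $|\beta| = r$ and $|\gamma| = l = |\alpha|-r$; setting $j = r-1$, its coefficient is $\sum_{j=0}^{|\alpha|-1}\frac{B_{-j}}{(j+1)!}\,\sigma_j(\alpha)$ for a certain sum $\sigma_j(\alpha)$ of products of multinomial coefficients over $\beta\le\alpha$, $|\beta| = j+1$. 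The key step is to identify $\frac{1}{(j+1)!}\sigma_j(\alpha)$ with $\binom{|\alpha|-1}{j}\frac{1}{\alpha_1!\cdots\alpha_k!}\,D_{|\alpha|-j-1}(\omega_1^{\alpha_1}\cdots\omega_k^{\alpha_k})$, where $D = \sum_i \partial/\partial\omega_i$ is the total derivative. Indeed $Dw = y$ and $Dy = 0$, so $D^{\,l}(w^m) = \frac{m!}{(m-l)!}\,w^{\,m-l}y^{\,l}$; expanding both sides in the $u_i$ and comparing the coefficient of $u^\alpha$ (with $|\alpha| = m$ and $l = m-1-j$) gives exactly $\sum_{|\alpha| = m}\frac{D_{m-1-j}(\omega^\alpha)}{\alpha_1!\cdots\alpha_k!}\,u^\alpha = \frac{1}{(j+1)!}\,w^{\,j+1}y^{\,m-1-j}$. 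Matching $x^n$-coefficients reassembles the claimed $L_{\omega,k,n}(\alpha)$. As a check, for $\omega = (1,1,\dots)$ one has $D_l(\omega^\alpha)\big|_{\omega = 1} = |\alpha|^{\underline{l}}$, and the Lah identity $q^{\overline{m+1}} = \sum_{j}\binom{m}{j}\frac{(m+1)!}{(j+1)!}\,q^{\underline{j+1}}$ collapses the formula to the GFP root formula $F^q_{k,n} = \sum_{\alpha\vdash n}\frac{B_{|\alpha|-1}}{\alpha_1!\cdots\alpha_k!}t^\alpha$.

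The hard part will be twofold: making precise, from the construction of the divisible closure in \cite{MT1}, the identification of the $q$-th convolution root with $\big(1+w/(1-y)\big)^q$ (in particular the normalization of the degree-$0$ term and the uniqueness up to sign); and the bookkeeping in the coefficient extraction, keeping the double sum over $r$ (equivalently $j$) and over the splittings $\alpha = \beta+\gamma$ aligned so that exactly $\binom{|\alpha|-1}{j}$ and exactly $D_{|\alpha|-j-1}$ emerge. An alternative, more elementary but messier route would verify directly that the proposed $L_{\omega,k,n}(\alpha)$ satisfies the linear recursion characterizing $P^q_{\omega,k,n}$, in the spirit of Lemma \ref{mylemmatheorem1proof}, which reduces to a falling-factorial identity for the $B_{-j}$ combined with the Leibniz rule for $D$. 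I would keep the generating-function line as the main argument, since it makes transparent why the Stirling operators of the \emph{second} kind, rather than the first, govern the WIP case.
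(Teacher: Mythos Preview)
The paper does not give its own proof of this theorem: it is quoted with attribution to \cite{MT1} and followed only by the definition of the total differential operator $D_j$ and a few low-degree examples. There is therefore nothing in the present paper to compare your argument against.

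That said, your generating-function line is sound as a sketch. The identity $\sum_{n\ge1}P_{\omega,k,n}x^n = w(x)/(1-y(x))$ (equivalently $P_{\omega,k,n}=\sum_i\omega_it_iF_{k,n-i}$) is correct, and once the $q$-th root is identified with $(1+w/(1-y))^q$ the rest is careful bookkeeping. The pivotal computation $D^{\,l}(w^m)=\tfrac{m!}{(m-l)!}\,w^{m-l}y^{\,l}$, obtained from $Dw=y$ and $Dy=0$, is exactly what converts the coefficient of $t^\alpha$ into the claimed expression in $D_{|\alpha|-j-1}(\omega^\alpha)$; your Lah-identity reduction to the GFP formula when $\omega=(1,1,\dots)$ is a correct consistency check (indeed $m^{\underline{m-j-1}}=m!/(j+1)!$ matches the Lah coefficient $\binom{m-1}{j}\tfrac{m!}{(j+1)!}$). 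The one genuinely soft point you already flag is real: the paper records $P_{\omega,k,0}=\omega_k$ while the examples give $P^q_{\omega,k,0}=1$, so the passage from ``$q$-th root in the divisible closure of the WIP-module'' to ``$q$-th power of the normalized generating series'' requires the normalization convention from \cite{MT1} to be made explicit; this is a definitional matter, not a gap in the combinatorics. Your proposed alternative---an inductive verification in the style of Lemma~\ref{mylemmatheorem1proof}---is plausibly closer to how \cite{MT1} actually argues, but that cannot be confirmed from the present paper. Finally, you are right to read the displayed $\frac{1}{(\Pi_i\alpha_i)!}$ as $\frac{1}{\prod_i(\alpha_i!)}$; otherwise the low-degree examples do not match.
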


The \emph{total differential operator} $D_j$ is defined inductively by $D_j=D_1(D_{j-1})$ with $D_1(\omega^{\alpha_1}_1\cdots \omega^{\alpha_k}_k)=\displaystyle\sum_{i=1}^k\partial_i(\omega^{\alpha_1}_1\cdots \omega^{\alpha_k}_k)=\displaystyle\sum_{i=1}^k\alpha_i(\omega^{\alpha_1}_1\cdots \omega^{\alpha_i-1}_i\cdots \omega^{\alpha_k}_k)$.
 For example,  $D_2(\omega_1^3\omega_2^2) = 6 \omega_1\omega_2^2+12\omega_1^2\omega_2+2\omega_1^3.$

Here we give some low-dimensional examples for the $q$-th roots of weighted isobaric polynomials:

$\begin{array}{rcl}
P^q_{\omega, k,0} &=&1\\
P^q_{\omega,k,1} &= & q\omega_1 t_1\\
P_{\omega,k,2}^q &= & [q\omega_1+ \frac{1}{2} q(q-1)\omega^2_1]t^2_1 +q \omega_2t_2\\
P_{\omega,k,3}^q &=& [q\omega_1+ q(q-1) \omega^2_1  +\frac{1}{3!}q(q-1)(q-2)\omega^3_1]t^3_1 +[ q (\omega_1 + \omega_2) +  q (q-1)\omega_1 \omega_2] t_1t_2   + q \omega_3t_3
\end{array}$
\\and in the Stirling operator notation,  these translate into:

$\begin{array}{rcl}
P^q_{\omega, k,0} &=&1\\
P^q_{\omega,k,1} &= & B_0\omega_1 t_1\\
P_{\omega,k,2}^q &= & [B_0\omega_1+ \frac{1}{2} B_{-1}\omega^2_1]t^2_1 +B_0 \omega_2t_2\\
P_{\omega,k,3}^q &=& [B_0\omega_1+ B_{-1} \omega^2_1  +\frac{1}{3!}B_{-2}\omega^3_1]t^3_1 +[ B_0 (\omega_1 + \omega_2) +  B_{-1}\omega_1 \omega_2] t_1t_2   + B_0 \omega_3t_3
\end{array}$

\begin{remark} A more precise notation for the roots is  $P^{\ast q}$,  emphasizing that this root is to be taken with respect to the convolution product,  that is,  to retrieve the original function after have taken the $q$-th root, one must take the convolution product $\frac{1}{q} $  times.   We shall use the shorter form  $P^q$ with the meaning  $P^q = P^{\ast q}$.
\end{remark}


  In the next section,  we describe how GFPs are used to produce an isomorphism from the WIP-module to the group of multiplicative arithmetic functions \cite{LM1}.  


\section{Multiplicative Arithmetic Functions} \label{WIP-MF}

The ring  (UFD) of arithmetic functions consists of the functions  $\alpha:  \mathbb{Z} \to\mathbb{Q}$.  The \emph{Dirichlet product} of  two arithmetic functions $\alpha$ and $\beta$ is given by 
$$ \alpha \ast \beta (n) = \sum_{d} \alpha(d) \beta(\frac{n}{d}), $$
where $ d | n$ \cite{Mc}. 
 
 The \emph{multiplicative arithmetic functions (MF)} are those functions $\alpha$ such that $$ \alpha(mn) = \alpha(m)\alpha(n)$$  whenever  $(m,n)= 1$.    This is equivalent to saying that a multiplicative function is completely determined by its values at primes.  We shall say that such functions are determined locally,  so that we are interested in the products
  $$ \alpha \ast \beta (p^n) =  \sum^{n}_{i=0}\alpha(p^i) \beta(p^{n-i}).$$ 
If we consider the group generated by GFPs under convolution product as  multiplication,  then we also get an abelian group. And if we consider all of the evaluations of the variables $t_j$ over the integers,  we produce a group that is locally isomorphic to the group  MF \footnote{There are analogous results for the additive group  GLPs and the group of additive arithmetic functions.}  \cite{MW1,  LM1}.     It was shown that this  induces a mapping from the divisible closure of the group generated by GFPs to the divisible closure of  MF, and this mapping is a local isomorphism  \cite{MT1}.  

Thus the matrix representations of $F^{q}_{k,n}$ carry over to matrix representations of the divisible closure of MF.  
 
\section*{References}

\end{document}